\newtheorem{thm}{Theorem}[section]
\newtheorem{lem}[thm]{Lemma}
\theoremstyle{definition}
\newtheorem{defn}[thm]{Definition}
\theoremstyle{remark}
\newtheorem{rem}[thm]{Remark}
\numberwithin{equation}{section} \numberwithin{table}{section}
\newcommand{\zell}{{\Z/{\ell\Z}}}
\newcommand{\arsub}{\ar@{}[r]|-*[@]{\subset}}
\newcommand{\arsup}{\ar@{}[r]|-*[@]{\supset}}
\newcommand{\arcap}{\ar@{}[d]|-*[@]{\subset}}
\newcommand{\arcup}{\ar@{}[u]|-*[@]{\subset}}
\renewcommand{\pmod}[1]{{~(\mathrm{mod}~{#1})}}
\renewcommand{\mod}[1]{{~\mathrm{mod}~{#1}}}
\newcommand{\Q}{{\mathbb{Q}}}
\newcommand{\Z}{{\mathbb{Z}}}
\newcommand{\T}{{\mathbb{T}}}
\newcommand{\bP}{{\mathbb{P}}}
\newcommand{\m}{{\mathfrak{m}}}
\newcommand{\cC}{{\mathcal{C}}}
\newcommand{\cI}{{\mathcal{I}}}
\newcommand{\cT}{{\mathcal{T}}}
\newcommand{\Pic}{{\mathrm{Pic}}}
\newcommand{\End}{{\mathrm{End}}}
\newcommand{\old}{{\mathrm{old}}}
\newcommand{\new}{{\mathrm{new}}}
\newcommand{\num}{{\mathrm{num}}}
\newcommand{\modl}{{\pmod {\ell}}} 
\mathchardef\hyp="2D
\renewcommand{\th}{{\mathrm{th}}}
\newcommand{\br}[1]{\langle #1 \rangle}
\newcommand{\zmod}[1]{{\Z/{#1}\Z}}
\newcommand{\exclude}[1]{}
\begin{document}                                                                          

\title{Rational torsion points on Jacobians of modular curves}
\author{Hwajong Yoo}
\address{Center for Geometry and Physics, Institute for Basic Science (IBS), Pohang, Republic of Korea 37673}
\email{hwajong@gmail.com}
\thanks{This work was supported by IBS-R003-G1.}

%\date{\today}
\subjclass[2010]{11G10, 11G18, 14G05}
\keywords{Rational points, modular curves, Eisenstein ideals}

\begin{abstract}
Let $p$ be a prime greater than 3. Consider the modular curve $X_0(3p)$ over $\Q$ and its Jacobian variety $J_0(3p)$ over $\Q$. Let $\cT(3p)$ and $\cC(3p)$ be the group of rational torsion points on $J_0(3p)$ and the cuspidal group of $J_0(3p)$, respectively. We prove that the $3$-primary subgroups of $\cT(3p)$ and $\cC(3p)$ coincide unless $p\equiv 1 \pmod 9$ and $3^{\frac{p-1}{3}} \equiv 1 \pmod {p}$.
\end{abstract} 
\maketitle
\setcounter{tocdepth}{1}
\tableofcontents

\section{Introduction}
Let $N$ be a square-free integer. Consider the modular curve $X_0(N)$ and its Jacobian variety $J_0(N)=\Pic^0(X_0(N))$. Let $\cT(N)$ denote the group of rational torsion points on $J_0(N)$ and let $\cC(N)$ denote the cuspidal group of $J_0(N)$. By Manin and Drinfeld \cite{Dr73, Ma72}, we have $\cC(N)\subseteq \cT(N)$ and they are both finite abelian groups.

When $N$ is prime, Ogg conjectured that $\cT(N)=\cC(N)$ \cite[Conjecture 2]{Og75}.
In his article \cite{M77}, Mazur proved this conjecture by studying the Eisenstein ideal of level $N$. Recently, Ohta proved a generalization of the result of Mazur \cite{Oh14}. More precisely, he proved the following. 
\begin{thm}[Ohta]\label{thm:ohta}
For a prime $\ell \geq 5$, we have $\cT(N)[\ell^\infty] = \cC(N)[\ell^\infty]$.
Moreover, if 3 does not divide $N$, then  $\cT(N)[3^\infty] = \cC(N)[3^\infty]$.
\end{thm}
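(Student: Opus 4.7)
The plan is to follow the Eisenstein-ideal strategy pioneered by Mazur for prime level and adapted by Ohta to the square-free setting. First I would observe that by Manin--Drinfeld we already have $\cC(N)[\ell^\infty] \subseteq \cT(N)[\ell^\infty]$, so only the reverse inclusion needs work. Let $\T = \T(N) \subseteq \End(J_0(N))$ denote the $\Z$-algebra generated by all Hecke operators. Since Frobenius acts trivially on any $\Q$-rational point, each $P \in \cT(N)[\ell^\infty]$ is killed by every operator of the form $T_p - (p+1)$ with $p \nmid N\ell$, and by suitable relations involving the $U_q$ for $q \mid N$. This forces $\cT(N)[\ell^\infty]$ to be supported on the set of Eisenstein maximal ideals $\m \subset \T$ of residue characteristic $\ell$, so the problem reduces to showing $\cT(N)_\m = \cC(N)_\m$ at each such $\m$.

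Next, for a fixed Eisenstein maximal ideal $\m$ of residue characteristic $\ell$, I would analyze the Galois module $J_0(N)[\m]$. The central structural input I would want to establish is that $\T_\m$ is Gorenstein and that $J_0(N)[\m]$ is two-dimensional over $\T/\m$, with semisimplification isomorphic to $\muell \oplus \Z/\ell$ (the cyclotomic character and the trivial character). Granted this, the $\Q$-rational line inside $J_0(N)[\m]$ must be the unramified trivial-character line; via a Hecke-equivariant comparison with the $\m$-component of the cuspidal subgroup (coming from the Eisenstein series associated to $\m$), this line is identified with $\cC(N)[\m]$. Summing over all Eisenstein $\m$ of residue characteristic $\ell$ then gives the desired equality $\cT(N)[\ell^\infty] = \cC(N)[\ell^\infty]$.

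The main obstacle will be establishing the Gorenstein/multiplicity-one statement at Eisenstein $\m$ of small residue characteristic. For this I would adopt Ohta's $\Lambda$-adic machinery: work with an ordinary Hecke algebra over a weight-variable Iwasawa algebra, use a careful analysis of the Shimura subgroup, and exploit the Eisenstein series parametrizing divisors of $N$ together with higher Eisenstein elements to pin down the congruence module. The restriction $\ell \geq 5$ (respectively $\ell = 3$ with $3 \nmid N$) arises precisely where this deformation-theoretic input is clean: when $\ell = 3$ and $3 \mid N$, oldform contributions from level $N/3$ produce $3$-torsion in the congruence module that is not visible in the cuspidal group at level $N$, and when $\ell = 2$ the two cusps of $X_0(N)$ above a divisor become indistinguishable modulo $2$, so multiplicity one can fail.

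Finally, I would assemble these local Eisenstein analyses and conclude via the decomposition of the $\ell$-primary part of $\cT(N)$ into the direct sum of its localizations at the Eisenstein maximal ideals of residue characteristic $\ell$, which matches term-by-term with the corresponding decomposition of $\cC(N)[\ell^\infty]$. The portion of the statement most likely to require the most care in writing up is the multiplicity-one analysis at $\ell = 3$ with $3 \nmid N$, since here one must rule out non-cuspidal rational $3$-torsion without the comfort of $\ell$ being larger than the ramification indices appearing in the local Galois representations at primes dividing $N$.
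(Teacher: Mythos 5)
Your overall skeleton --- Manin--Drinfeld for the inclusion $\cC(N)\subseteq\cT(N)$, the Eichler--Shimura relation to show $\cI_0$ kills $\cT(N)[\ell^\infty]$ and thereby cut it down to Eisenstein components, then a component-by-component comparison with the cuspidal group --- agrees with the strategy the paper sketches for Ohta's theorem. The problem is your central structural input: that $J_0(N)[\m]$ is two-dimensional over $\T/\m$ (and the closely related Gorenstein property of $\T_\m$) at \emph{every} Eisenstein maximal ideal $\m$ of residue characteristic $\ell$. For composite square-free $N$ this is false in general: multiplicity one is known to fail at certain Eisenstein maximal ideals of the shape $(\ell,\,U_p-1,\,U_q-1,\,\cI_0)$ even for $\ell\geq 5$ (see the author's cited work \cite{Yoo14}, \cite{Yoo15a} and related work of Ribet). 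At such $\m$ the trivial-character isotypic part of $J_0(N)[\m]$ can have dimension at least $2$, so the step where you identify ``the $\Q$-rational line'' inside $J_0(N)[\m]$ with the cuspidal line does not get off the ground. This is exactly the new difficulty that distinguishes composite level from Mazur's prime-level situation, and your proposal does not address it.

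Ohta's actual proof, as sketched in the paper, is organized to sidestep this. He replaces the algebra generated by the $U_p$ by the algebra $\T(N)'$ generated by the $T_r$ ($r\nmid N$) together with the Atkin--Lehner involutions $w_p$ ($p\mid N$); since $w_p^2=1$ and $\ell$ is odd, $\T(N)'_\ell/\cI_0$ splits unconditionally as a product of quotients $\T(N)'_\ell/\cI_M$ indexed by the proper divisors $M$ of $N$, and the equality $\cT(N)[\ell^\infty][\cI_M]=\cC(N)[\ell^\infty][\cI_M]$ is then obtained from an explicit computation of the index of $\cI_M$ in $\T(N)'_\ell$, matched against the order of the corresponding cuspidal divisor, rather than from a blanket multiplicity-one statement \cite{Oh14}. (The present paper's own Theorem \ref{thm:maintheorem} for $N=pq$ does invoke a Mazur-style multiplicity-one argument, but only for the components $\cI_2$ and $\cI_3$ where it is actually known, after Theorem \ref{thm:decomposition} shows the troublesome component $\cI_1$ is absorbed or vanishes.) To salvage your route you would need either to restrict the multiplicity-one claim to the components where it holds and handle the remaining ones separately, or to substitute Ohta's index computation. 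Your explanations of the exclusions at $\ell=2$ and at $\ell=3$ with $3\mid N$ are also speculative rather than tied to the actual obstruction in the index computations.
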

(For a finite abelian group $A$, $A[\ell^{\infty}]$ denotes the $\ell$-primary subgroup of $A$.)

We briefly sketch the proof of this theorem.
Let $T_r$ (resp. $U_p$ and $w_p$) denote the $r^\th$ Hecke operator (resp. the $p^\th$ Hecke operator and the Atkin-Lehner operator with respect to $p$) acting on $J_0(N)$ for a prime $r$ not dividing $N$ (resp. a prime divisor $p$ of $N$). Let $\T(N)$ (resp. $\T(N)'$) be the $\Z$-subalgebra of $\End(J_0(N))$ generated by $T_r$'s and $U_p$'s (resp. $T_r$'s and $w_p$'s) for primes $ r\nmid N$ and $p \mid N$. Let
$$
\cI_0 : = ( T_r-r-1 : \text{for primes } r \nmid N)
$$
be the (minimal) Eisenstein ideal of $\T(N)$ (or $\T(N)'$). Then, $\cI_0$ annihilates $\cT(N)$ and $\cC(N)$ by the Eichler-Shimura relation. Thus, $\cT(N)[\ell^\infty]$ is a module over 
$\T(N)_{\ell}/{\cI_0}$ (or $\T(N)'_{\ell}/{\cI_0}$), where $\T(N)_{\ell}:=\T(N)\otimes_{\Z} \Z_{\ell}$. 
Note that since $w_p^2=1$, for a prime $\ell\geq 3$ we have the following decomposition:
$$
\T(N)'_{\ell}/{\cI_0} = \prod_{M\mid N,~M \neq N} \T(N)'_{\ell}/{\cI_{M}},
$$ 
where $\cI_{M}:=(w_p-1, ~w_q+1,~\cI_0 : \text{ for primes } p \mid M \text{ and } q \mid N/M)$. Thus, we have 
$$
\cT(N)[\ell^\infty] = \oplus \cT(N)[\ell^{\infty}] [\cI_M] \quad\text{and}\quad \cC(N)[\ell^\infty] = \oplus \cC(N)[\ell^{\infty}] [\cI_M].
$$
Finally, he proved that $\cT(N)[\ell^\infty][\cI_M] = \cC(N)[\ell^{\infty}][\cI_M]$ by computing the index of $\cI_M$ (up to 2-primary parts).

In this paper, we discuss the case where $N=pq$ for two distinct primes $p$ and $q$.
In contrast to the discussion above, we use $\T(pq)$ instead of $\T(pq)'$ and hence the corresponding decomposition of $\T(pq)/{\cI_0}$ as above does not always exist. 
(However, other computations are relatively easier than the method by Ohta.)
When $\ell$ satisfies some conditions, we get the similar decomposition of the quotient ring $\T(pq)/{\cI_0}$ and we can prove the following. 
\begin{thm}[Main Theorem]\label{thm:maintheorem}
For a prime $\ell$ not dividing $2pq \gcd(p-1,~q-1)$, we have $\cT(pq)[\ell^\infty]=\cC(pq)[\ell^\infty]$.
Moreover, $\cT(pq)[p^\infty]=\cC(pq)[p^\infty]$
if one of the following holds:
\begin{enumerate}
\item $p\geq 5$ and $\begin{cases} 
\text{either } ~q \not\equiv 1 \pmod p   ~~\text{ or} \\
q \equiv 1 \pmod p ~\text{ and }~ p^{\frac{q-1}{p}} \not\equiv 1 \pmod q .
\end{cases}$
\item $p=3$ and $\begin{cases} 
\text{either } ~q \not\equiv 1 \pmod 9   ~~\text{ or} \\
q \equiv 1 \pmod 9 ~\text{ and }~ 3^{\frac{q-1}{3}} \not\equiv 1 \pmod q .
\end{cases}$
\end{enumerate}
\end{thm}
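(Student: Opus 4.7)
The plan is to adapt the Mazur-Ohta framework sketched in the introduction to the full Hecke algebra $\T(pq)$ generated by $T_r$ (for primes $r\nmid pq$) and $U_p, U_q$. For each proper divisor $M$ of $pq$ I would define the Eisenstein ideal
$$\cI_M := \bigl(\cI_0,\; U_p-a_p(M),\; U_q-a_q(M)\bigr),$$
where $a_r(M):=1$ if $r\mid M$ and $a_r(M):=r$ otherwise. These $\cI_M$ correspond to the Eisenstein-style eigenvalue patterns for $(U_p,U_q)$ arising via the quadratic relation $U_p^2 - T_p U_p + p = 0$ on the oldform image of $J_0(q)^2 \to J_0(pq)$ (and symmetrically for $q$), combined with the Eisenstein congruence $T_r\equiv r+1\pmod{\cI_0}$; together they account for all Eisenstein components of $\T(pq)/\cI_0$.

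Under the hypothesis $\ell\nmid 2pq\gcd(p-1,q-1)$, the candidate eigenvalue pairs are pairwise distinct modulo $\ell$, yielding the product decomposition
$$\T(pq)_\ell/\cI_0 \;\simeq\; \prod_{M\mid pq,\,M\neq pq}\T(pq)_\ell/\cI_M$$
and reducing the theorem to the component statement $\cT(pq)[\ell^\infty][\cI_M]=\cC(pq)[\ell^\infty][\cI_M]$ for each $M$. For each $M$ I would (i) compute $\T(pq)_\ell/\cI_M$ explicitly as a cyclic $\Z_\ell$-module of order equal to an Eisenstein number $n_M$ built from the cuspidal divisor classes on $X_0(pq)$; (ii) match $\cC(pq)[\ell^\infty][\cI_M]$ with this module by an explicit computation using the four cusps $0,\,1/p,\,1/q,\,\infty$ (the lower bound); (iii) obtain the matching upper bound by analyzing the Galois representation on $J_0(pq)[\m]$ for each Eisenstein maximal ideal $\m\supseteq\cI_M$, using reducibility together with Mazur-style class field theoretic input to exclude extra torsion.

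The main obstacle is the case $\ell=p$ treated in parts (1) and (2), where $\ell$ divides the level. Here the eigenvalues $1$ and $p$ collapse modulo $p$, destroying the clean decomposition; instead one must group the ideals $\cI_M$ appropriately, analyze each merged component, and replace the étale Galois-module argument by a Raynaud-type analysis of $J_0(pq)[p]$ as a finite flat group scheme over $\Z_p$, combined with a study of the Neron component groups of $J_0(pq)$ at $p$ and $q$. The hypotheses $q\not\equiv 1\pmod p$ (respectively $q\not\equiv 1\pmod 9$ when $p=3$) ensure that no additional Eisenstein maximal ideal of residue characteristic $p$ appears; the alternative Kummer-theoretic condition $p^{(q-1)/p}\not\equiv 1\pmod q$ amounts to the non-existence of a cyclic degree-$p$ extension of $\Q$ unramified outside $pq$ with prescribed local ramification, which is exactly what prevents an unwanted enlargement of $\cT(pq)[p^\infty]$ beyond the cuspidal piece. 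Verifying this delicate class-field-theoretic and integral $p$-adic input is the technical crux of the argument and is what sharply distinguishes the permitted cases from the excluded one.
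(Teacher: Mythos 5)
Your outline has two concrete problems in the first (generic $\ell$) part. First, the indexing of Eisenstein components is wrong: with your convention the proper divisors $M\in\{1,p,q\}$ of $pq$ give the eigenvalue pairs $(U_p,U_q)\equiv(p,q),(1,q),(p,1)$. The pair $(p,q)$ supports no Eisenstein maximal ideal (the corresponding Eisenstein series has unit constant term, hence admits no congruence with cusp forms), while the pair $(1,1)$, i.e.\ $\cI_1=(U_p-1,U_q-1,\cI_0)$, is genuinely present (its index is $\num\!\left(\frac{(p-1)(q-1)}{3}\right)$ up to powers of $2$) and is missing from your list. Second, the claim that under $\ell\nmid 2pq\gcd(p-1,q-1)$ the eigenvalue pairs are pairwise distinct modulo $\ell$ is false: that hypothesis only excludes $p\equiv q\equiv 1\pmod{\ell}$ simultaneously, so for instance $\ell\mid p-1$ with $\ell\nmid q-1$ is allowed, and then $(1,1)\equiv(p,1)\pmod{\ell}$, i.e.\ $\m_1=\m_3$. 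Handling this collision is precisely the technical heart of the paper: after reducing by $p\leftrightarrow q$ symmetry to the case $\ell\nmid q-1$, one shows $(U_p-1)(U_p+1)\in\cI_0\T_{\ell}$ (no maximal ideal over $\cI_0$ is $p$-old when $q\not\equiv 1\pmod{\ell}$), proves the key relation $(U_q-1)(U_q-q)\in(U_p-1,\cI_0)$ by an Eisenstein-series argument \`a la Mazur--Ohta, and uses the index formula to show $\cI_1\T_{\ell}=\cI_3\T_{\ell}$ when $p\equiv 1\pmod{\ell}$, yielding a decomposition $\T_{\ell}/\cI_0\simeq\T_{\ell}/\cI_2\times\T_{\ell}/\cI_3$ into only \emph{two} factors. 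Your proposal contains no substitute for these lemmas, and the "pairwise distinct eigenvalues" shortcut it relies on does not hold.

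For $\ell=p$ the paper's mechanism is also different from what you describe, and simpler: it does not perform a Raynaud-type finite flat analysis of $J_0(pq)[p]$ over $\Z_p$. Under the stated hypotheses one checks that every Eisenstein maximal ideal of residue characteristic $p$ is either non-maximal or non-new; in the non-new case $J^{\new}(\Q)[p^\infty]=0$, hence $J(\Q)[p^\infty]=J_{\old}(\Q)[p^\infty]$, and the statement reduces to the theorem of Chua and Ling on the rational torsion of the old part (using $p\nmid 2\gcd(p-1,q-1)$); in the remaining subcase $q\equiv -1\pmod P$ only $\m_2$ survives, it is new, and the generic argument applies. In particular the condition $p^{\frac{q-1}{p}}\not\equiv 1\pmod q$ is used to force $\m_1=\m_2$ to be \emph{non-new} via the criterion for newness of Eisenstein maximal ideals, not to directly exclude a degree-$p$ extension carrying extra torsion. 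So while your plan is in the right Mazur--Ohta family, as written it neither produces the correct component decomposition nor contains the reduction to Chua--Ling that actually closes the $\ell=p$ case.
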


Note that most cases are special ones of Theorem \ref{thm:ohta}. The new result is as follows:
\begin{thm}
Let $p$ be a prime greater than 3. 
Assume that either $p \not\equiv 1 \pmod 9$ or $3^{\frac{p-1}{3}} \not\equiv 1 \pmod p$. Then, we get
$$
\cT(3p)[3^\infty]=\cC(3p)[3^\infty].
$$
\end{thm}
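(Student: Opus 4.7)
The plan is to specialize the decomposition strategy outlined for $\T(pq)/\cI_0$ in the introduction to the case $N=3p$ at $\ell=3$. By the Eichler--Shimura relation, $\cI_0$ annihilates both $\cC(3p)$ and $\cT(3p)$, so $\cT(3p)[3^\infty]$ is naturally a module over the finite ring $\T(3p)_3/\cI_0$, and it suffices to decompose this ring as a product of local factors $\T(3p)_3/\cI_M$ indexed by proper divisors $M$ of $3p$, bound the orders of those factors, and match them with the $3$-primary cuspidal contribution.

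First I would enumerate the Eisenstein maximal ideals $\m$ of $\T(3p)_3$ containing $\cI_0$. Each such $\m$ is determined modulo $3$ by the residues of $U_3$ and $U_p$: the natural candidates are $U_3 \in \{1,3\}$ and $U_p \in \{1,p\}$, yielding at most three maximal ideals $\m_M$ for $M \in \{1,3,p\}$, paralleling Ohta's Atkin--Lehner decomposition. I would then establish a Chinese-remainder-style isomorphism
$$\T(3p)_3/\cI_0 \;\cong\; \prod_{M \in \{1,3,p\}} \T(3p)_3/\cI_M,$$
where $\cI_M$ is the analogous ideal generated by $\cI_0$ together with $U_q - 1$ for primes $q \mid M$ and $U_q - q$ for primes $q \mid 3p/M$. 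Splitting requires the three maximal ideals to be pairwise distinct $3$-adically, which is where the hypothesis should first enter.

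Next, for each $M$ I would compute the $3$-adic index $[\T(3p)_3 : \cI_M]$ and compare it with the explicit order of $\cC(3p)[\m_M^\infty]$ obtained from the four cusps of $X_0(3p)$ and their degeneracy structure, relying on the $3$-old piece via Mazur--Ohta's equality $\cT(p)[3^\infty]=\cC(p)[3^\infty]$. Since $\cC(3p) \subseteq \cT(3p)$ and both are killed by $\cI_M$ on the $\m_M$-primary component, matching the two counts forces $\cT(3p)[\m_M^\infty] = \cC(3p)[\m_M^\infty]$, and summing over $M$ yields the theorem.

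The hard part will be the index computation and the resulting decomposition. The hypothesis ``$p \not\equiv 1 \pmod 9$ or $3^{(p-1)/3} \not\equiv 1 \pmod p$'' is a cubic-residue condition on $3$ modulo $p$ together with the non-containment $\mu_9 \not\subset \F_p$, and I expect it to be precisely the criterion ensuring that the three Eisenstein maximal ideals remain distinct $3$-adically and that the local index at each $\m_M$ does not pick up an extra factor of $3$ beyond what the cuspidal group accounts for. In the excluded case an additional Eisenstein congruence at level $3p$ should inflate $\T(3p)_3/\cI_0$ beyond the cuspidal contribution, which would both obstruct the decomposition and plausibly produce non-cuspidal rational $3$-torsion.
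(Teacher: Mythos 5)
Your plan transplants the strategy of the case $\ell \nmid N$ (decompose $\T_\ell/\cI_0$ into local Eisenstein factors, compute indices, invoke multiplicity one) to the case $\ell = 3 \mid N$, but that is precisely where this strategy breaks down, and the paper does something genuinely different there. Three concrete gaps. First, the product decomposition $\T(3p)_3/\cI_0 \cong \prod_M \T(3p)_3/\cI_M$ is not available: the paper's Theorem \ref{thm:decomposition} is proved only under the standing hypothesis $\ell \nmid 2pq(q-1)$, which fails here since $\ell = 3$ divides the level (and, in the hard subcase $p \equiv 1 \pmod 9$ with $3$ not a cube mod $p$, also divides $p-1$); the author explicitly warns in the introduction that this decomposition ``does not always exist'' and only conjectures a fiber-product description in general. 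Second, your step ``matching the two counts forces $\cT[\m_M^\infty]=\cC[\m_M^\infty]$'' silently assumes a multiplicity-one statement ($\dim_{\F_3} J_0(3p)[\m] = 2$, so that the $\m$-torsion of $\cT$ is cyclic over $\T/\cI_M$); comparing the order of a finite module with the order of the ring it is a module over proves nothing without that input, and multiplicity one is exactly what is delicate when $\ell$ divides the level and the maximal ideal is old.

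Third, and most importantly, you have misidentified the role of the hypothesis. It is not a condition ensuring distinctness of the Eisenstein maximal ideals or controlling an index. Via \cite[Theorem 3.1]{Yoo15}, the condition $3^{(p-1)/3} \not\equiv 1 \pmod p$ (together with $U_3 \equiv 3 \equiv 0$ forcing the remaining ideal to be non-new) guarantees that \emph{no} Eisenstein maximal ideal of $\T(3p)_3$ containing $3$ is new. The paper then argues that $J^{\new}(\Q)[3^\infty] = 0$ (otherwise it would produce a new Eisenstein maximal ideal over $3$), so all rational $3$-torsion lies in the old subvariety, and concludes by citing Chua--Ling \cite[Theorem 2]{CL97} for $J_0(pq)$ together with the fact that $3 \nmid 2\gcd(3-1, p-1)$. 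That reduction to the old part and the appeal to Chua--Ling is the actual engine of the proof in the main case, and your outline has no substitute for it: the ``Mazur--Ohta equality at level $p$'' you invoke does not by itself transfer through the degeneracy maps to the stated equality at level $3p$. The other two cases of the trichotomy ($p \not\equiv 1 \pmod 9$) are handled in the paper either by showing $\T_3/\cI_0 = 0$ outright or, when $p \equiv -1 \pmod 9$, by a genuinely new (hence multiplicity-one amenable) maximal ideal --- so a correct write-up must split into these cases rather than run a single uniform index comparison.
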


\subsection{Notation}\label{sec:notation}
For $x=a/b \in \Q$, we denote by $\num(x)$ the numerator of $x$, i.e.,
$$
\num(x) := \frac{a}{(a, ~b)}.
$$

From now on, we denote by $\ell^\alpha:=\ell^{\alpha(p,~q,~\ell)}$ (resp. $\ell^\beta:=\ell^{\beta(p,~q,~\ell)}$) the exact power of $\ell$ dividing 
$$
M_p:=\num\left( \frac{(p-1)(q^2-1)}{3} \right) ~\left(\text{ resp. } \num\left( M_q:=\frac{(p^2-1)(q-1)}{3} \right) \right).
$$

\section{Eisenstein ideals of level $pq$}\label{sec:Eisenstein}
Throughout this section, we fix two distinct primes $p$ and $q$; and $\ell$ denotes a prime not dividing $2pq(q-1)$. 
Let $\T:=\T(pq)$ and $\T_{\ell}:=\T(pq)\otimes_{\Z} \Z_{\ell}$. We say that an ideal of $\T$ is
\textit{Eisenstein} if it contains
$$
\cI_0:=(T_r-r-1~:~\text{for primes } r \nmid pq).
$$ 

\begin{defn}
We define Eisenstein ideals as follows: 
$$
\cI_1 := (U_p-1,~U_q-1, ~\cI_0);
$$
$$
\cI_2 := (U_p-1,~U_q-q, ~\cI_0) \quad\text{and}\quad \cI_3 := (U_p-p,~U_q-1,~\cI_0).
$$
Moreover, we set $\m_i :=(\ell, ~\cI_i)$. They are all possible Eisenstein maximal ideals in $\T_{\ell}$ by the result in \cite[\textsection 2]{Yoo15a}. For ease of notation, we set $\T_i :=\T_{\m_i}=\lim_{\leftarrow n} \T/{\m_i^n}$.
\end{defn}

Since $\T_{\ell}$ is a semi-local ring, we have
$$
\T_{\ell} = \prod_{\ell \in \m~ \text{maximal}} \T_{\m}.
$$

Using the above description of Eisenstein maximal ideals, we prove the following:

\begin{thm}\label{thm:decomposition}
The quotient $\T_{\ell}/{\cI_0}$ is isomorphic to $\T_{\ell}/{\cI_2} \times \T_{\ell}/{\cI_3}$.
\end{thm}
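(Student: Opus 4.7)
The plan is to apply the Chinese Remainder Theorem to the pair of ideals $\cI_2,\cI_3$ in $\T_\ell$ and to show $\cI_0 = \cI_2 \cap \cI_3$.

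First, I would establish coprimality: the element $(U_q-q)-(U_q-1)=1-q$ lies in $\cI_2+\cI_3$, and by hypothesis $\ell\nmid q-1$, so $1-q$ is a unit in $\Z_\ell\subseteq\T_\ell$. Hence $\cI_2+\cI_3=\T_\ell$, and CRT yields $\T_\ell/(\cI_2\cap\cI_3)\xrightarrow{\sim}\T_\ell/\cI_2\times\T_\ell/\cI_3$. Composed with the natural surjection $\T_\ell/\cI_0\twoheadrightarrow\T_\ell/(\cI_2\cap\cI_3)$, this gives the map of the theorem, which is automatically surjective with kernel $(\cI_2\cap\cI_3)/\cI_0$. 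So injectivity reduces to proving $\cI_2\cap\cI_3=\cI_0$, equivalently (by coprimality) $\cI_2\cdot\cI_3\subseteq\cI_0$; that in turn reduces to the four cross-product relations
\[
(U_p-1)(U_p-p),\quad (U_q-q)(U_q-1),\quad (U_p-1)(U_q-1),\quad (U_p-p)(U_q-q)\in\cI_0.
\]

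The first two should be Eichler--Shimura-type identities: $U_r$ satisfies an equation of the form $X^2 - T_r X + r$ on the $r$-old part of $S_2(\Gamma_0(pq))$ for $r\in\{p,q\}$, and $\cI_0$ imposes $T_r \equiv r+1$ on the Eisenstein-congruent piece, which together with $U_r^2=1$ on $r$-newforms gives $(U_r-1)(U_r-r)\equiv 0 \pmod{\cI_0}$. The last two are the delicate part: they reflect the classification of mod-$\ell$ Eisenstein eigensystems on $\T_\ell$ from \cite[\S 2]{Yoo15a}, which restricts these systems to a subset of $\{(1,1),(1,q),(p,1)\}$. The fourth candidate $(p,q)$ is structurally excluded, since the corresponding Eisenstein ``series'' is not modular (reflecting $(p-1)(q-1)\ne 0$), and under the hypothesis $\ell\nmid q-1$ the $(1,1)$-system is also excluded, since its Eisenstein congruence number with cusp forms contains a factor of $q-1$, so $\m_1$ fails to be a maximal ideal of $\T_\ell$. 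With only $(1,q)$ and $(p,1)$ surviving, direct substitution shows that both $(U_p-1)(U_q-1)$ and $(U_p-p)(U_q-q)$ vanish on every Eisenstein eigensystem, giving the desired relations in $\T_\ell/\cI_0$.

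The main obstacle I expect is precisely this structural input: establishing rigorously that $\m_1$ is not a maximal ideal of $\T_\ell$ under $\ell\nmid q-1$. Without it, the fourth cross-product relation fails, and an extra factor corresponding to $\m_1$ would appear in the decomposition. Once this classification from \cite{Yoo15a} is in hand, the CRT argument together with the Eichler--Shimura relations should finish the proof.
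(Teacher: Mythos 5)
Your CRT skeleton is sound: the comaximality $q-1=(U_q-1)-(U_q-q)\in \cI_2+\cI_3$ together with $\ell\nmid q-1$ does give $\T_{\ell}/(\cI_2\cap\cI_3)\simeq \T_{\ell}/\cI_2\times\T_{\ell}/\cI_3$, and the theorem is indeed equivalent to $\cI_2\cI_3\subseteq\cI_0\T_{\ell}$, i.e.\ to your four cross-product relations. (The paper reaches the same reduction by decomposing the semi-local ring $\T_{\ell}$ over its Eisenstein maximal ideals and working one completion at a time.) However, two of your supporting claims are wrong or insufficient. First, $\m_1$ is \emph{not} excluded by the hypothesis $\ell\nmid q-1$: by \cite[Theorem 1.4]{Yoo15a} the index of $\cI_1$ is $\num\bigl(\tfrac{(p-1)(q-1)}{3}\bigr)$ up to powers of $2$, so $\m_1$ is maximal whenever $\ell\mid p-1$, regardless of $q$. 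The paper's Lemma \ref{lem:decomposition} deals with this by showing that in that case $p\equiv 1\modl$ forces $\m_1=\m_3$ and in fact $\cI_1\T_{\ell}=\cI_3\T_{\ell}$ (using that the index of $\cI_3\T_\ell$ divides $p-1$), so no extra factor appears; your argument as written simply denies the case.

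Second, and more seriously, verifying that the four products vanish on every residual Eisenstein eigensystem only places them in the Jacobson radical of $\T_{\ell}/\cI_0$, not in $\cI_0$ itself: the local quotients are of the form $\zmod{\ell^{\alpha}}$ with $\alpha$ possibly $\geq 2$, so ``vanishes mod every Eisenstein maximal ideal'' proves nothing about membership in $\cI_0\T_{\ell}$. The actual proofs of these relations are the substance of the theorem. For $U_p$, note that on the $p$-new part one has $(U_p-1)(U_p+1)=0$, not $(U_p-1)(U_p-p)=0$; the paper first shows (using $\ell\nmid q-1$) that no Eisenstein maximal ideal is $p$-old, so $(U_p-1)(U_p+1)\in\cI_0\T_{\ell}$, and then divides by the local unit $U_p+1$ in each completion. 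For $U_q$ the situation is genuinely delicate: since $\m_2$ need not be $q$-old, $(U_q-1)(U_q-q)$ is not an Eichler--Shimura formality, and the paper proves only the weaker statement $(U_q-1)(U_q-q)\in(U_p-1,\,\cI_0)$ by a Mazur-style comparison of $q$-expansions with the Eisenstein series $E_{p,pq}$ and an appeal to Ohta's lemma (Lemma \ref{lem:(U_q-1)(U_q-q)}); this weaker form suffices only after one already knows $U_p-1\in\cI_0\T_{\m_2}$. Your proposal identifies the right four relations but supplies no workable mechanism for proving them, so as it stands the argument has a genuine gap at its core step.
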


This theorem is crucial to deduce our main theorem. In general, the author expects that
$\T_{\ell}/\cI_0$ should be isomorphic to
$$
\{ (x,~y,~z) \in \T_{\ell}/{\cI_1} \times \T_{\ell}/{\cI_2} \times \T_{\ell}/{\cI_3} ~:~ x \equiv y \pmod {p-1} \text { and } x \equiv z \pmod {q-1} \}.
$$
 
Before proving the theorem above, we need several lemmas.
\begin{lem}
We have $(U_p-1)(U_p+1) \in \cI_0 \T_{\ell}$.
\end{lem}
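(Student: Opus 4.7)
My approach would be to verify the identity $U_p^2 - 1 \equiv 0$ modulo $\cI_0 \T_\ell$ by analyzing the $U_p$-action on each piece of the decomposition of $J_0(pq)$ into new and old components, then transferring the resulting module-theoretic identity into the Hecke algebra via a faithfulness argument.

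First I would observe that on the $pq$-new quotient of $J_0(pq)$, Atkin-Lehner theory gives $U_p = -w_p$, so that $U_p^2 = w_p^2 = 1$ identically. Similarly, the $q$-old part (arising from $J_0(p)^2$ via the two degeneracy maps) carries a $U_p$-action factoring through level $p$, where newforms have $U_p$-eigenvalue $\pm 1$, again by Atkin-Lehner, so $U_p^2 = 1$. The main case is the $p$-old part coming from $J_0(q)^2$, where the degeneracy-induced action satisfies the quadratic $U_p^2 - T_p U_p + p = 0$ with $T_p$ the pulled-back level-$q$ Hecke operator. Here the hypothesis $\ell \nmid q-1$ becomes essential: by Mazur's theorem at level $q$, there are no cuspidal Eisenstein congruences modulo $\ell$ at that level, so the $p$-old contribution to the $\cI_0$-torsion at $\ell$ reduces to the Eisenstein piece generated by $E_2(\tau) - qE_2(q\tau)$, on which $T_p$ acts as $p+1$. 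On this piece $U_p$ therefore satisfies $(U_p - 1)(U_p - p) = 0$, and the algebraic rewriting $(U_p - 1)(U_p + 1) = (U_p - 1)(U_p - p) + (p+1)(U_p - 1)$ reduces the problem to showing that $(p+1)(U_p - 1)$ vanishes on the $\cI_0$-torsion. At the components $\m_1, \m_2$ where $U_p \equiv 1$ this is immediate after controlling the infinitesimal correction $U_p - 1 \in \m_i$; at $\m_3$ where $U_p \equiv p$, I would use that, under $\ell \nmid q-1$, the local quotient at $\m_3$ has exponent dividing $\ell^\beta$ with $\ell^\beta \mid p^2 - 1$, so that $p^2 - 1$ acts as zero.

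The main obstacle will be the final step: passing from this module-theoretic vanishing to the containment $(U_p-1)(U_p+1) \in \cI_0 \T_\ell$ inside the Hecke algebra itself. This requires faithfulness of the action of $\T_\ell/\cI_0 \T_\ell$ on the $\cI_0$-torsion of $J_0(pq)[\ell^\infty]$, or else working with a concrete faithful Hecke module such as the character group of the torus in the N\'eron model of $J_0(pq)$ at $p$. The delicate point is to establish this faithfulness without circular dependence on the decomposition Theorem \ref{thm:decomposition}, which this lemma is meant to prepare for.
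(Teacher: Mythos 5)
You have the right central observation --- namely that $\ell \nmid q-1$ forces every Eisenstein maximal ideal of $\T_{\ell}$ to be $p$-new, and that $U_p^2=1$ on the $p$-new part because $U_p=-w_p$ there --- but the route you build around it both contains a superfluous detour and ends in a gap you yourself flag without closing. The detour is your analysis of the $p$-old part: once you have invoked Mazur's theorem at level $q$ to conclude that no maximal ideal of $\T_{\ell}$ containing $\cI_0$ can be $p$-old, there is no ``$p$-old contribution to the $\cI_0$-torsion'' left to analyze. Your discussion of the Eisenstein series $E_2(\tau)-qE_2(q\tau)$, the relation $(U_p-1)(U_p-p)=0$ on that piece, and the bound $\ell^\beta \mid p^2-1$ at $\m_3$ is not needed (and is somewhat off the mark, since $\T$ acts on the Jacobian, so the Eisenstein series itself is not part of the relevant module, and $\m_3$ is in any case $p$-new under the running hypotheses).

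The genuine gap is the final step. You propose to prove a module-theoretic vanishing and then transfer it to the ring via faithfulness of $\T_{\ell}/\cI_0$ on the $\cI_0$-torsion of $J_0(pq)[\ell^\infty]$; you correctly note this is delicate and potentially circular with Theorem \ref{thm:decomposition}, and you do not resolve it. The point you are missing is that no module is needed at all: $\T_{\ell}$ is semi-local, so $\T_{\ell}/\cI_0$ is the product of the quotients $\T_{\m}/\cI_0$ over the finitely many maximal ideals $\m \supseteq (\ell,\cI_0)$. Since each such $\m$ is $p$-new, each localization $\T_{\m}$ is a localization of the $p$-new quotient $\T_{\ell}^{p\hyp\new}$, whence $\T_{\ell}/\cI_0 \simeq \T_{\ell}^{p\hyp\new}/\cI_0$ as rings, and the identity $U_p^2=1$ in $\T_{\ell}^{p\hyp\new}$ immediately gives $(U_p-1)(U_p+1)\in \cI_0\T_{\ell}$. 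This is the paper's two-line argument; it works entirely inside the Hecke algebra and makes the faithfulness issue disappear.
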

\begin{proof}
Since $q\not\equiv 1 \modl$, any maximal ideal containing $\cI_0$ cannot be $p$-old. Therefore $\T_{\ell}/\cI_0 \simeq \T_{\ell}^{p\hyp\new}/\cI_0$. Since $U_p^2=1$ in $\T_{\ell}^{p\hyp\new}$, the result follows. 
\end{proof}

\begin{lem}\label{lem:decomposition}
Suppose that $\m_2$ is maximal. Then, we have
$$
\T_2/\cI_0 = \T_2/\cI_2 \simeq \T_{\ell}/\cI_2.
$$
If $\m_1$ is maximal, then $p\equiv 1\modl$ and hence $\m_1=\m_3$; moreover, we have $\T_1/\cI_0 = \T_3/\cI_0 \simeq \T_{\ell}/\cI_3$. If $p \not\equiv 1 \modl$, then $\m_1$ is not maximal and $\T_3/\cI_0\simeq \T_{\ell}/\cI_3$.
\end{lem}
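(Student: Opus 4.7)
My strategy is to combine the Hensel-type argument from the previous lemma with the classification of Eisenstein maximal ideals in \cite[\textsection 2]{Yoo15a}. For each clause, the proof reduces to two sub-tasks: showing that $\m_i$ is the only maximal ideal of $\T_\ell$ containing $\cI_i$, which gives $\T_\ell/\cI_i = \T_i/\cI_i$; and showing that $\cI_0 \T_i = \cI_i \T_i$, which gives $\T_i/\cI_0 = \T_i/\cI_i$. I would handle the two sub-tasks separately and in this order for each of $i = 2, 3$.

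For the uniqueness, the only candidate Eisenstein maximal ideals are $\m_1, \m_2, \m_3$. Both $\m_1$ and $\m_3$ contain $U_q - 1$, so under $\ell \nmid q - 1$ they cannot contain $U_q - q$; hence $\m_2$ is the unique maximal ideal over $\cI_2$, and analogously $\m_3$ over $\cI_3$. For the implication that $\m_1$ being maximal forces $p \equiv 1 \pmod \ell$, I would invoke the index formulas of \cite{Yoo15a}: a mod-$\ell$ Eisenstein form with $U_p \equiv U_q \equiv 1 \pmod \ell$ requires $\ell \mid \gcd(p-1,\, q-1)$, and since $\ell \nmid q - 1$, this forces $\ell \mid p-1$. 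Then $U_p - 1 \equiv U_p - p \pmod{\ell}$ gives $\m_1 = \m_3$ as ideals, whence $\T_1 = \T_3$; the same classification applied in the contrapositive gives part (c).

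For the ideal identification at $\m_2$, the previous lemma gives $(U_p - 1)(U_p + 1) \in \cI_0 \T_\ell$; since $U_p \equiv 1 \pmod{\m_2}$, the factor $U_p + 1 \equiv 2$ is a unit in $\T_2$ (recall $\ell$ is odd), so $U_p - 1 \in \cI_0 \T_2$. For $U_q - q$ I would establish the analogous identity by decomposing $\T_\ell^{p\hyp\new}/\cI_0$ into $q\hyp\new$ and $q\hyp\old$ summands: on the former $U_q^2 = 1$, and on the latter the characteristic polynomial $x^2 - T_q x + q$ of $U_q$ factors as $(x-1)(x-q)$ modulo $\cI_0$ because $T_q \equiv q + 1$ on Eisenstein forms of level $p$. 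Combined, $(U_q - 1)(U_q - q)(U_q + 1) \in \cI_0 \T_\ell$; since $U_q - 1 \equiv q - 1$ is a unit in $\T_2$, this gives $(U_q - q)(U_q + 1) \in \cI_0 \T_2$, and discarding the remaining unit factor yields $U_q - q \in \cI_0 \T_2$. The analogous localization at $\m_3$ takes care of parts (b) and (c).

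The main obstacle I anticipate is twofold. First, in the edge cases $p \equiv -1$ or $q \equiv -1 \pmod \ell$ the would-be unit $U_p + 1$ or $U_q + 1$ is no longer invertible at the relevant maximal ideal, so Hensel does not strip it off directly; resolving this requires showing that the ``bad'' summand ($q\hyp\new$ with $U_q = -1$, resp.\ $p\hyp\new$ with $U_p = -1$) does not actually contribute to $\m_2$ (resp.\ $\m_3$), which I expect to follow by separating components via the index formulas in \cite{Yoo15a}. Second, for part (b) when $p \equiv 1 \pmod \ell$, passing from $U_p - 1 \in \cI_0 \T_3$ to the required $U_p - p \in \cI_0 \T_3$ demands that $p - 1$ annihilates $\T_3/\cI_0$, i.e.\ that the exponent of the Eisenstein quotient is at most $v_\ell(p-1)$; this should follow from a rank/index comparison between $\T_3/\cI_0$ and $\T_3/\cI_3$ using the computation of the Eisenstein number in terms of $M_q$, and it is the technically most delicate step.
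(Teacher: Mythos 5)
Your overall architecture matches the paper's: localize at the Eisenstein maximal ideals, strip unit factors Hensel-style from relations like $(U_p-1)(U_p+1)\in\cI_0\T_{\ell}$, use the index formulas of \cite{Yoo15a} both to force $p\equiv 1\modl$ when $\m_1$ is maximal and to get $p-1\in\cI_3\T_{\ell}$ (your ``most delicate step'' is indeed resolved exactly as you sketch: the $\ell$-part of $\num((p^2-1)(q-1)/3)$ divides $p-1$ because $\ell\nmid(p+1)(q-1)$), and deduce $\T_i/\cI_i\simeq\T_{\ell}/\cI_i$ from finiteness of the index. A minor slip: the index of $\cI_1$ is $\num(\tfrac{(p-1)(q-1)}{3})$ up to powers of $2$, not something supported on $\gcd(p-1,q-1)$; your deduction $\ell\mid p-1$ survives, but the criterion as you state it would make the $\m_1$ clause vacuous.

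The genuine gap is in your treatment of $U_q-q\in\cI_0\T_2$ (and its mirror at $\m_3$). Your old/new decomposition only yields the relation $(U_q-1)(U_q-q)(U_q+1)\in\cI_0$ (and even that requires care, since $\T_{\ell}^{p\hyp\new}/\cI_0$ need not split as a product of its $q$-old and $q$-new images), and the extra factor $U_q+1\equiv q+1\pmod{\m_2}$ is not a unit when $q\equiv-1\modl$. This case is permitted by the running hypothesis $\ell\nmid 2pq(q-1)$, and your proposed escape --- that the $q$-new summand with $U_q=-1$ does not contribute to $\m_2$ --- is false: when $q\equiv -1\modl$ one has $U_q=-1\equiv q\pmod{\m_2}$ on that summand, and precisely there new Eisenstein congruences occur (indeed $\ell\mid q+1$ divides $M_p$, so $\m_2$ is typically maximal and $pq$-new in this situation). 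The paper closes this gap with a separate lemma asserting the clean relation $(U_q-1)(U_q-q)\in(U_p-1,\,\cI_0)$, with no spurious $(U_q+1)$ factor; it is proved not by old/new bookkeeping but by Mazur's $q$-expansion technique: one compares $f-E_{p,pq}$ modulo $I=(U_p-1,\cI_0)$ with a $q$-stabilized form $g(qz)$ of level $p$ via Ohta's lemma \cite[Lemma 2.1.1]{Oh14}, and reads off $(U_q-q)(U_q-1)\in I$ from the coefficient at $q$. Without this (or an equivalent) input, your argument does not establish $\T_2/\cI_0=\T_2/\cI_2$ when $q\equiv-1\modl$, nor the corresponding statement at $\m_3$ when $p\equiv-1\modl$ (where the paper instead invokes \cite[Proposition 2.3]{Yoo15}).
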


\begin{proof}
Suppose that $\m_2$ is maximal.
Since $U_p-1 \in \m_2$ and $\ell$ is odd, $U_p+1 \not\in \m_2$ and hence it is a unit in $\T_2$. By the lemma above, $(U_p-1)(U_p+1) \in \cI_0 \T_{\ell}$ and hence $U_p-1 \in \cI_0 \T_2$. Similarly, we have $U_q-q \in \cI_0 \T_2$ because $q \not\equiv 1 \modl$ and $(U_q-1)(U_q-q) \in \cI_0\T_2$ by the following lemma. Thus, we have $\T_2/\cI_0 = \T_2/\cI_2$. Since the index of $\cI_2$ in $\T$ is finite (cf. \cite[Lemma 3.1]{Yoo14}), we have $\m_2^{n} \subseteq \cI_2$ for large enough $n$. Therefore $\T_{\ell}/{(\m_2^n, \,\cI_2)} \simeq \T_{\ell}/\cI_2$ and hence $\T_2/\cI_2 \simeq \T_{\ell}/\cI_2$.

If $\m_1$ is maximal, the index of $\cI_1$ in $\T$ is divisible by $\ell$. By \cite[Theorem 1.4]{Yoo15a}, it is $\num(\frac{(p-1)(q-1)}{3})$ up to powers of $2$ and hence $p \equiv 1 \modl$. 

Assume that $p\equiv 1 \modl$. Let $\alpha$ be the number in \textsection \ref{sec:notation}. 
Since $\ell$ does not divide $(p+1)(q-1)$, $\ell^\alpha$ divides $(p-1)$.
Note that the index of $\cI_3$ in $\T_{\ell}$ is equal to $\ell^\alpha$ (cf. \cite[Theorem 1.4]{Yoo15a}) and hence $\cI_3 \T_{\ell}$ contains $p-1$. Thus, $U_p-1=(U_p-p)+(p-1) \in \cI_3\T_{\ell}$. In other words, $\cI_1\T_{\ell} \subseteq \cI_3\T_{\ell}$. Similarly, we have $\cI_3\T_{\ell} \subseteq \cI_1\T_{\ell}$. Therefore we have $\cI_1\T_{\ell}=\cI_3\T_{\ell}$. By the same argument as above, $\cI_0\T_3$ contains $U_p-1$ and $(U_q-1)(U_q-q)$. Since $q\not\equiv 1\modl$ and $U_q-1 \in \m_3$, we have $U_q-q \not\in \m_3$ and hence $\T_3/{\cI_0} = \T_3 / \cI_3$. 
By the same argument as above, we get $\T_3/\cI_3 \simeq \T_{\ell}/\cI_3$. 

If $p\not\equiv 1\modl$, then $\m_3$ is neither $p$-old nor $q$-old. If $p\not\equiv -1\modl$, then $\m_3$ is not maximal. Thus, we have $\T_{\ell}/\cI_3=\T_3/\cI_0=0$. If $p\equiv -1\modl$, then the result follows by \cite[Proposition 2.3]{Yoo15}.
\end{proof}

\begin{lem}\label{lem:(U_q-1)(U_q-q)}
Let $I:=(U_p-1, ~\cI_0) \subseteq \T_{\ell}$. Then, we get $(U_q-1)(U_q-q) \in I$.
\end{lem}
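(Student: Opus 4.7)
The plan is to exploit a Hecke-algebraic relation coming from the $q$-old/$q$-new decomposition of $J_0(pq)$. On the $q$-old subvariety, which is the image of $J_0(p)^2$ under the two degeneracy maps into $J_0(pq)$, a standard computation with the degeneracy formulas shows that $U_q$ satisfies $U_q^2 - T_q U_q + q = 0$, where $T_q$ denotes the Hecke operator at level $p$; on the $q$-new subvariety, $U_q = -w_q$ with $w_q^2 = 1$ forces $U_q^2 = 1$. Since $J_0(pq) \otimes \Q$ is the direct sum of these two pieces and $\T_\ell$ acts faithfully on the integral Tate module, we obtain
\[
(U_q^2 - 1)(U_q^2 - T_q U_q + q) = 0 \qquad \text{in } \T_\ell.
\]
Because $T_q \equiv q+1 \pmod{\cI_0}$, reducing modulo $\cI_0$ gives $(U_q - 1)^2(U_q + 1)(U_q - q) \in \cI_0 \T_\ell \subseteq I$.

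Next I localize at the maximal ideals of $\T_\ell$. At non-Eisenstein maximal ideals the containment is vacuous, and among the Eisenstein maximal ideals only $\m_2$ can contain $I$: $\m_1$ is non-maximal because its index $\num((p-1)(q-1)/3)$ has no factor of $\ell$ under the hypothesis $\ell\nmid q-1$, while $\m_3$ contains $U_p - 1$ only when $p \equiv 1 \modl$, in which case $\m_3 = \m_1$ remains non-maximal. Thus it suffices to verify the containment in $\T_2$. In $\T_2$, the element $U_q - 1$ is a unit because $U_q \equiv q \not\equiv 1 \pmod{\m_2}$, so cancelling $(U_q - 1)^2$ from the containment above yields
\[
(U_q + 1)(U_q - q) \in \cI_0 \T_2 \subseteq I \T_2.
\]

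The final step is to upgrade this to $(U_q - 1)(U_q - q) \in I \T_2$; their difference is $2(U_q - q)$, so this amounts to $U_q - q \in I\T_2$. When $\ell \nmid q+1$ the factor $U_q + 1 \equiv q+1 \pmod{\m_2}$ is also a unit in $\T_2$, and one concludes at once. The hard part is the case $\ell \mid q+1$, where $U_q + 1$ lies in $\m_2$ and $\m_2$ acquires a nontrivial $q$-new Eisenstein contribution. There I would first invoke the preceding lemma to see that $U_p - 1 \in \cI_0 \T_2$ (so $I\T_2 = \cI_0\T_2$), and then analyze the $q$-new component directly: on it the operator $(U_q-1)(U_q-q)$ acts as the scalar $2(q+1)$, whose $\ell$-adic valuation matches the exponent of $\T_2/\cI_0$ as a $\Z_\ell$-module dictated by the Eisenstein index $\num((p-1)(q^2-1)/3)$, forcing the scalar into $\cI_0\T_2$. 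This last matching of valuations is the part of the argument I expect to be the most delicate.
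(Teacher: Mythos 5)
Your route is genuinely different from the paper's (which works with $q$-expansions: it compares the form $f=\sum (T_n \bmod I)x^n$ with the Eisenstein series $E_{p,pq}$, observes $f-E$ is supported on multiples of $q$ modulo $I$, invokes Ohta's Lemma 2.1.1 to realize it as $(U_q-q)g(qz)$ for a level-$p$ form $g$, and compares $q$-th coefficients). Unfortunately your localization step contains a false claim that hides exactly the hard content of the lemma. The hypothesis of Section 2 is $\ell \nmid 2pq(q-1)$; nothing prevents $\ell \mid p-1$, and then $\num\bigl(\frac{(p-1)(q-1)}{3}\bigr)$ is divisible by $\ell$, so $\m_1=\m_3$ \emph{is} a maximal ideal and it contains $I=(U_p-1,\cI_0)$. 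At that component $U_q\equiv 1\pmod{\m_1}$, so $U_q-q$ and $U_q+1$ are units, and your quartic relation, after cancelling units, only yields $(U_q-1)^2\in\cI_0\T_{\m_1}$. What the lemma asserts there is $U_q-1\in I\T_{\m_1}$ (since $U_q-q$ is a unit), and in a local ring $x^2\in J$ does not give $x\in J$. This is not a removable technicality: the passage from $(U_q-1)^2$ to $U_q-1$ is precisely what the paper buys with Ohta's lemma (the level-$p$ form $g$ satisfies $b_q\equiv 1$ modulo the level-$p$ Eisenstein ideal, whence $U_q\equiv a_q-q\equiv b_q\equiv 1$), and it is the input that later lets Lemma \ref{lem:decomposition} conclude $\T_3/\cI_0=\T_3/\cI_3$.

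Two further points. First, in the remaining case $\m_2$ with $\ell\mid q+1$ you concede the argument is incomplete, and the proposed valuation match fails in general: if $\ell$ divides both $p-1$ and $q+1$, the exponent $\ell^{\alpha}$ of $\T_{\ell}/\cI_2$ is $\ell^{v_\ell(p-1)+v_\ell(q+1)}$, which strictly exceeds $v_\ell(2(q+1))$; moreover $\m_2$ then has a $q$-old part as well, so controlling the action on the $q$-new component alone would not place the operator in $\cI_0\T_2$. Second, the relation $(U_q^2-1)(U_q^2-T_qU_q+q)=0$ is not literally a relation in $\T_{\ell}$ as the paper defines it, since $T_q$ (the level-$p$ operator) is not one of its generators and is not even defined on the $q$-new part; this can probably be patched, but the two gaps above are the substantive ones, and they indicate that the lemma is not a formal consequence of the characteristic polynomial of $U_q$.
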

\begin{proof}
We closely follow the argument in \cite[\textsection II. 5]{M77}. 

Let $f(z):=\sum_{n\geq 1} (T_n \mod {I}) x^n$ be the Fourier expansion (at $\infty$) of a cusp form of weight 2 and level $pq$ over $\T_{\ell}/I$, where $x=e^{2\pi i z}$.
Let $E:=E_{p, \,pq}$ be an Eisenstein series of weight 2 and level $pq$ in \cite[\textsection 2.3]{Yoo14}. Note that 
$$
(f-E)(z) \equiv (U_q-q)\sum_{n\geq 1} a_n x^{qn} \pmod I,
$$
where $a_p=1$ and $a_r=1+r$ for all primes $r \neq pq$; and $a_q=U_q+q$.   
If $U_q-q \not\in I$, then by Ohta \cite[Lemma 2.1.1]{Oh14}, there is a cusp form $g(z)=\sum_{n\geq 1} b_n x^n$ of weight 2 and level $p$
such that 
$$
(f-E)(z) \equiv (U_q-q)\sum_{n\geq 1} a_n x^{qn} \equiv (U_q-q) g(qz) \pmod I.
$$
Therefore $p\equiv 1 \modl$ and $b_r \equiv 1+r \pmod {I'}$ for primes $r\neq p$, where $I'$ is the Eisenstein ideal of level $p$. Thus, we have $(U_q-q)(a_q-b_q) \equiv (U_q-q)(U_q-1) \in I$.
\end{proof}

Now, we are ready to prove Theorem \ref{thm:decomposition}.
\begin{proof}[Proof of Theorem \ref{thm:decomposition}]
If $p\equiv 1 \modl$, then $\m_1=\m_3$. Otherwise $\m_1$ is not maximal. Therefore, we have
$$
\T_{\ell}/\cI_0 \simeq \T_2/\cI_0 \times \T_3/\cI_0 = \T_2/\cI_2 \times \T_3/\cI_3 
\simeq \T_{\ell}/\cI_2 \times \T_{\ell}/\cI_3.
$$
\end{proof}

\section{Case where $\ell$ does not divide $pq$}\label{sec:proof}
From now on, let $\cC:=\cC(pq)$ and $\cT:=\cT(pq)$ be the cuspidal group of $J_0(pq)$ and the group of rational torsion points on $J_0(pq)$, respectively.
For a prime $r$ and a finite abelian group $A$, we denote by $A[r^\infty]$ the $r$-primary subgroup of $A$.
In this section, we prove the following theorem.
\begin{thm}\label{thm:maintheorem}
For a prime $\ell$ not dividing $2pq(q-1)$, we have $\cT[\ell^\infty] = \cC[\ell^\infty]$.
\end{thm}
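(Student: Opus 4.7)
The plan is to deduce the theorem from Theorem \ref{thm:decomposition} by splitting both sides into their $\cI_2$- and $\cI_3$-components and matching orders on each piece. Since $\cI_0$ annihilates $\cT$ by Eichler--Shimura, and annihilates $\cC \subseteq \cT$ by Manin--Drinfeld, both groups are modules over $\T_{\ell}/\cI_0$. Theorem \ref{thm:decomposition} factors this ring as $\T_{\ell}/\cI_2 \times \T_{\ell}/\cI_3$, so the corresponding idempotents induce
\[
\cT[\ell^\infty] = \cT[\ell^\infty][\cI_2] \oplus \cT[\ell^\infty][\cI_3], \qquad \cC[\ell^\infty] = \cC[\ell^\infty][\cI_2] \oplus \cC[\ell^\infty][\cI_3],
\]
and the inclusion $\cC \subseteq \cT$ respects this decomposition. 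It therefore suffices to verify $|\cT[\ell^\infty][\cI_i]| = |\cC[\ell^\infty][\cI_i]|$ for $i=2,3$.

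For each such $i$ I would squeeze these orders between a common bound equal to $|\T_{\ell}/\cI_i|$. The upper bound on the torsion side, $|\cT[\ell^\infty][\cI_i]| \leq |\T_{\ell}/\cI_i|$, I expect to come from the multiplicity-one / Gorenstein package for Eisenstein primes at squarefree level: the kernel $J_0(pq)[\m_i]$ should fit into a non-split short exact sequence $0 \to \muell \to J_0(pq)[\m_i] \to \zell \to 0$ of $G_{\Q}$-modules, so its $G_{\Q}$-fixed part is one-dimensional over $\T/\m_i$, and combined with the Gorenstein-ness of the localization $\T_i$ (available under our hypotheses from the cited Eisenstein-ideal papers) this limits the rational $\m_i$-power torsion to at most $|\T_{\ell}/\cI_i|$. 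The matching lower bound $|\cC[\ell^\infty][\cI_i]| \geq |\T_{\ell}/\cI_i|$ I would obtain from the explicit four-cusp description of $\cC(pq)$ together with the index computation of \cite[Theorem 1.4]{Yoo15a}, which should give $|\T_{\ell}/\cI_2| = \ell^\beta$ and $|\T_{\ell}/\cI_3| = \ell^\alpha$ in the notation of Section \ref{sec:notation}. Squeezing then forces the two orders to coincide, and hence the inclusion on each component is an equality.

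The step carrying the real mathematical weight here is the upper bound on the torsion, namely that any Eisenstein rational torsion must already be cuspidal. Thanks to the clean two-factor decomposition of Theorem \ref{thm:decomposition} (which itself uses the hypothesis $\ell \nmid 2pq(q-1)$ to ensure $\m_1$ is either absent or equal to $\m_3$, so no extra Eisenstein component contributes), this reduces to invoking the multiplicity-one / Gorenstein input from prior work. Given that input, the proof itself should be only a few lines: cite Theorem \ref{thm:decomposition}, apply the two-sided bound on each component, and conclude. The only place where a genuinely new argument could be needed is if the cited multiplicity-one / Gorenstein statements do not directly cover the maximal ideals $\m_2$ and $\m_3$ under the precise hypothesis $\ell \nmid 2pq(q-1)$; verifying that they do is the main point to double-check before the proof can be written down.
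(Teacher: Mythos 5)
Your proposal is correct and follows essentially the same route as the paper: decompose via Theorem \ref{thm:decomposition}, bound $\cT[\ell^\infty][\cI_i]$ above using the multiplicity-one statement for $J_0(pq)[\m_i]$ (the paper invokes the argument of Mazur's Corollary II.14.8 via \cite[Theorem 4.2]{Yoo14}), and match it below with the explicit cuspidal divisors $D_p$, $D_q$, which generate free rank-one modules over $\T_{\ell}/\cI_2$ and $\T_{\ell}/\cI_3$. The only slip is notational: in the paper's conventions $|\T_{\ell}/\cI_2|=\ell^{\alpha}$ and $|\T_{\ell}/\cI_3|=\ell^{\beta}$, not the other way around.
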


Before proving this theorem, we introduce some cuspidal divisors.

Let $P_n$ be the cusp of $X_0(pq)$ corresponding to $1/n \in \bP^1(\Q)$. Let $C_p:=P_1-P_p$ and $C_q:=P_1-P_q$ denote the cuspidal divisors in $\cC$.
Let $M_p = \ell^{\alpha} \times x$ and $M_q=\ell^{\beta} \times y$ as in \textsection \ref{sec:notation}. (Thus, we have $(\ell,~xy)=1$.)
We define 
$$
D_p := xC_p \quad \text{and} \quad D_q:=yC_q.
$$
Then, $\br {D_p}$ (resp. $\br {D_q}$) is a free module of rank 1 over $\T_{\ell}/\cI_2 \simeq \zmod {\ell^{\alpha}}$ (resp. $\T_{\ell}/\cI_3 \simeq \zmod {\ell^{\beta}}$) (cf. \cite[Theorem 1.4]{Yoo15a}). 

Now we prove the Theorem above.

\begin{proof}[Proof of Theorem \ref{thm:maintheorem}]
By the Eichler-Shimura relation, $\cT[\ell^\infty]$ is a module over $\T_{\ell}/\cI_0$. Therefore $\cT[\ell^\infty]$ decomposes into $\cT[\ell^\infty][\cI_2] \times \cT[\ell^\infty][\cI_3]$ by Theorem \ref{thm:decomposition}. Hence it suffices to show that 
$\cT[\ell^\infty][\cI_2] = \br {D_p}$ and $\cT[\ell^\infty][\cI_3] = \br {D_q}$.

If $\alpha=0$, then $\T_{\ell}/\cI_2=0$ and hence $\cT[\ell^\infty][\cI_2]=\br {D_p}=0$. Thus, we may assume that 
$\alpha \geq 1$. 
Note that 
$$
\cT[\ell^\infty][\cI_2] \simeq \prod_{i=1}^t \zmod {\ell^{a_i}},
$$ 
where $1\leq a_i \leq \alpha$ because $\T_{\ell}/\cI_2 \simeq \zmod {\ell^{\alpha}}$ (and $\cT$ is finite).
Since $D_p \in \cT[\ell^\infty]$, we have $\br {D_p} \subseteq \cT[\ell^\infty][\cI_2]$ and hence $t\geq 1$; and $\cT[\ell^\infty][\ell,~\cI_2] \simeq (\zell)^{\oplus t} \subseteq J_0(N)[\m_2]$.
By the same argument in \cite[\textsection II, Corollary 14.8]{M77} (cf. \cite[Theorem 4.2]{Yoo14}), we have $t = 1$ and $\cT[\ell^\infty][\cI_2] = \br{D_p}$. By symmetry, $\cT[\ell^\infty][\cI_3]=\br{D_q}$ and the result follows.  
\end{proof}

\section{Case where $\ell=p$ or $\ell=q$}
Throughout this section, we set $P:=p$ if $p\geq 5$; and $P:=9$ if $p=3$.
Suppose that
\begin{equation}
\ell=p \quad \text{and}\quad \begin{cases} 
\text{either } ~q \not\equiv 1 \pmod P   ~~\text{ or} \\
q \equiv 1 \pmod P ~\text{ and }~ p^{\frac{q-1}{p}} \not\equiv 1 \pmod q.
\end{cases}
\end{equation}

\begin{thm}
We have $\cT[p^\infty] = \cC[p^\infty]$.
\end{thm}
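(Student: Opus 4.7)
The plan is to adapt the argument of Theorem~\ref{thm:maintheorem} from Section~\ref{sec:proof} to the case $\ell = p$: obtain a product decomposition of $\T_p/\cI_0$ over Eisenstein maximal ideals of residue characteristic $p$, use Eichler--Shimura to split $\cT[p^\infty]$ accordingly, and identify each isotypic piece with the corresponding cuspidal piece.

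First, I would classify the Eisenstein maximal ideals of $\T$ at $p$. By \cite[Theorem 1.4]{Yoo15a}, the indices $[\T:\cI_1]$, $[\T:\cI_2]$, $[\T:\cI_3]$ are (essentially) the numerators of $(p-1)(q-1)/3$, $(p-1)(q^2-1)/3$, and $(p^2-1)(q-1)/3$. The hypothesis then determines which $\m_i = (p, \cI_i)$ are maximal: if $q \not\equiv 1 \pmod P$, the $p$-parts of $[\T:\cI_1]$ and $[\T:\cI_3]$ vanish, so at most $\m_2$ is Eisenstein maximal at $p$. If $q \equiv 1 \pmod P$, then $\m_1 = \m_2$ (since $U_q - 1 \equiv U_q - q \pmod p$), while $\m_3$ remains a distinct Eisenstein maximal ideal; in this case the power residue condition $p^{(q-1)/p} \not\equiv 1 \pmod q$ is precisely what prevents any further collapse of local components at $\m_3$.

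Next I would establish the characteristic-$p$ analog of Theorem~\ref{thm:decomposition}, using a replacement for Lemma~\ref{lem:(U_q-1)(U_q-q)} to obtain quadratic relations for $U_p$ and $U_q$ modulo $\cI_0 \T_p$ after localization at each surviving maximal ideal. Once this decomposition is in hand, the remainder of the argument follows that of Theorem~\ref{thm:maintheorem} essentially verbatim: Eichler--Shimura makes $\cT[p^\infty]$ a module over $\T_p/\cI_0$ and splits it into $\cI_i$-isotypic pieces; a Mazur-style dimension bound on $J_0(pq)[\m_i]$ (\cite[\textsection II.14]{M77}, cf.~\cite[Theorem 4.2]{Yoo14}) combined with the existence of the cyclic generators $D_p, D_q$ from \textsection\ref{sec:proof} yields the desired equality $\cT[p^\infty] = \cC[p^\infty]$.

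The main obstacle is the lifting lemma in characteristic $p$. In Lemma~\ref{lem:(U_q-1)(U_q-q)} one invokes Ohta's result \cite[Lemma 2.1.1]{Oh14} to lift a mod-$\ell$ cusp form from level $pq$ to level $p$; for $\ell = p$ the reduction mod $p$ of level-$pq$ cusp forms acquires additional structure and this lifting can fail. The power residue hypothesis $p^{(q-1)/p} \not\equiv 1 \pmod q$ is precisely what eliminates the obstruction, as by class field theory it detects the nonexistence of a certain mod-$p$ cyclic extension of $\Q$ tamely ramified at $q$, thereby guaranteeing the nondegeneracy of the relevant Eisenstein congruence. The $p = 3$ case requires the stronger mod-$9$ hypothesis because the denominator $3$ in $M_p$ and $M_q$ shifts the relevant $3$-adic valuations by one, so an ordinary $q \equiv 1 \pmod 3$ is not enough to force the Eisenstein component at $\m_3$ to exist.
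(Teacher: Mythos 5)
Your strategy --- decompose $\T_p/\cI_0$ into its local Eisenstein components and run the Mazur-style multiplicity-one argument at each one, as in Theorem \ref{thm:maintheorem} --- matches the paper in two of the three cases, but it breaks down in the hardest case, $q\equiv 1\pmod P$, and the fix you sketch for the lifting lemma is not what resolves it.

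When $q\equiv 1\pmod P$, the ideals $\m_1=\m_2$ and $\m_3$ are all Eisenstein maximal over $p$ (since $p\mid q-1$ divides all three indices), and the actual role of the hypothesis $p^{\frac{q-1}{p}}\not\equiv 1\pmod q$ --- which you misattribute --- is to force $\m_1=\m_2$ to be \emph{non-new} (by \cite[Theorem 3.1]{Yoo15}); $\m_3$ is automatically non-new because $U_p\equiv p\equiv 0\pmod {\m_3}$. For old Eisenstein maximal ideals the dimension bound on $J_0(pq)[\m_i]$ from \cite[\textsection II.14]{M77} is not available (the old subvariety contributes extra copies of the relevant Galois modules, so the multiplicity-one step ``$t=1$'' in your verbatim adaptation has no justification), and no analogue of Theorem \ref{thm:decomposition} or Lemma \ref{lem:(U_q-1)(U_q-q)} is established at $\ell=p$ here. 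The paper sidesteps all of this: since every Eisenstein maximal ideal over $p$ is non-new, $J^{\new}(\Q)[p^\infty]=0$, so the exact sequence relating $J_{\old}(\Q)[p^\infty]$, $J(\Q)[p^\infty]$ and $J^{\new}(\Q)[p^\infty]$ gives $\cT[p^\infty]\subseteq J_{\old}(\Q)$, and the conclusion is then imported wholesale from Chua--Ling \cite[Theorem 2]{CL97} on the rational torsion of the old part. Your proposal contains no substitute for this mechanism, and your suggestion that the power-residue condition ``eliminates the obstruction'' to Ohta's lifting lemma via class field theory is speculation that does not supply one.

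The remaining cases are essentially as you describe and your outline would work there: if $q\not\equiv \pm 1\pmod P$ (including, for $p=3$, the case $q\equiv 1\pmod 3$ with $q\not\equiv 1\pmod 9$) there are no Eisenstein maximal ideals over $p$ at all, so both groups vanish; and if $q\equiv -1\pmod P$ with $q\not\equiv 1 \pmod p$ the unique Eisenstein maximal ideal $\m_2$ is new, the relations $(U_p-1)(U_p+1)=(U_q-1)(U_q+1)=0$ in $\T^{\new}$ replace the lifting lemma entirely, and the Mazur-style argument applies. So the gap is concentrated in, and is fatal for, the case $q\equiv 1\pmod P$.
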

\begin{proof}
We divide the problem into three cases.
\begin{enumerate}
\item
Suppose that $q \not\equiv 1 \pmod P$ and $q \equiv 1 \pmod p$. This happens when $\ell=p=3$.
In this case, the indices of $\cI_1$, $\cI_2$ and $\cI_3$ are not divisible by $3$ (cf. \cite[Theorem 1.4]{Yoo15a}). Therefore there are no Eisenstein maximal ideals containing $3$ and
$\T_{p}/\cI_0=0$. Thus, we have $\cT[3^\infty]=\cC[3^\infty]=0$.

\item
Suppose that $q \equiv 1 \pmod P$ and $p^{\frac{q-1}{p}} \not\equiv 1 \pmod q$. Then, $\m_1=\m_2$ is not new by \cite[Theorem 3.1]{Yoo15}. Since $U_p \equiv p \equiv 0 \pmod {\m_3}$, $\m_3$ is not new. Therefore $\T_{p}/\cI_0 \simeq \T^{\old}_p/\cI_0$.
Consider the following exact sequence:
$$
\xymatrix{
0 \ar[r] & J_{\old}(\Q)[p^\infty] \ar[r] & J(\Q)[p^\infty] \ar[r] & J^{\new}(\Q)[p^\infty].
}
$$
If $J^{\new}(\Q)[p^\infty] \neq 0$, then there is a new Eisenstein maximal ideal containing $p$, which is a contradiction. Therefore we have $J_{\old}(\Q)[p^\infty] = J(\Q)[p^\infty]$. Now, the result follows from \cite[Theorem 2]{CL97}
because $p$ does not divide $2(p-1,~q-1)$.

\item 
Suppose that $q \not\equiv 1 \pmod p$. First, assume that $q \not\equiv -1 \pmod P$. 
Then, the indices of $\cI_1$, $\cI_2$ and $\cI_3$ are not divisible by $p$, there is no Eisenstein maximal ideal. Thus, $\T_p/{\cI_0}=0$ and $\cT[p^\infty]=\cC[p^\infty]=0$.

Next, assume that $q\equiv -1 \pmod P$. By the same reason as above, $\m_1$ and $\m_3$ are not maximal (but $\m_2$ is). Note that $\m_2$ is neither $p$-old nor $q$-old by Mazur. Therefore we get $\T_2/{\cI_0} \simeq \T^{\new}_{\m_2}/{\cI_0}$. 
Since $(U_p-1)(U_p+1)=(U_q-1)(U_q+1)=0$ in $\T^{\new}$, we get 
$\T_2/{\cI_0}=\T_2/{\cI_2}\simeq \T_{p}/{\cI_2}$ by \cite[Proposition 2.3]{Yoo15}. 
As in the proof of Theorem \ref{thm:maintheorem}, we get 
$$
\cT[p^\infty]=\cT[p^\infty][\cI_2]=\cC[p^\infty][\cI_2]=\cC[p^\infty].
$$
\end{enumerate}
\end{proof}

\begin{rem}
If $p>q$, then the assumption above holds and hence $\cT[p^\infty]=\cC[p^\infty]$. Since $\cC[p^\infty]=0$, there are no rational torsion points of order $p$ on $J_0(pq)$.
\end{rem}

\bibliographystyle{annotation}

\end{document}